\newtheorem{theorem}{Theorem}[section]
\newtheorem{lemma}[theorem]{Lemma}
\newtheorem{proposition}[theorem]{Proposition}
\theoremstyle{definition}
\theoremstyle{remark}
\newtheorem{remark}[theorem]{Remark}
\newcommand{\Z}{\mathbb{Z}}
\newcommand{\C}{\mathbb{C}}
\renewcommand{\P}{\mathbb{P}}
\newcommand{\F}{\mathbb{F}}
\newcommand{\cA}{\mathcal{A}}
\newcommand{\cB}{\mathcal{B}}
\newcommand{\on}{\operatorname}
\renewcommand{\Re}{\on{Re}}
\renewcommand{\mod}[1]{\,(\mathrm{mod}\,#1)}
\newcommand{\of}[1]{\left(#1\right)}
\newcommand{\set}[1]{\left\{#1\right\}}
\author{Biao Wang}
\address{School of Mathematics and Statistics, Yunnan University, Kunming, Yunnan 650091, China}
\email{bwang@ynu.edu.cn}
\date{\today}
\title[Erd\H{o}s covering systems]{Bounds for Erd\H{o}s covering systems in global function fields}
\subjclass[2020]{11T55, 11A07, 11Y35} 
\keywords{covering systems, minimum modulus problem,  distortion method, function fields}
\begin{document}
	
\begin{abstract}
Covering systems of the integers were introduced by Erd\H{o}s in 1950. Since then, many beautiful questions and conjectures about these objects have been posed.  Most famously, Erd\H{o}s asked whether the minimum modulus of a covering system with distinct moduli is arbitrarily large. This problem was resolved in 2015 by Hough, who proved that the minimum modulus is bounded. In 2022, Balister et al. developed Hough's method and gave a simpler but more versatile proof of Hough's result. Their technique has many applications in a number of variants on  Erd\H{o}s' minimum modulus problem. In this paper, we show that there is no covering system of multiplicity $s$ in any global function field of genus $g$ over $\mathbb{F}_q$ for $q\geq(82.26+18.88g)e^{0.95g}s^2$. Moreover, we obtain that there is no covering system of $\mathbb{F}_q[x]$ with distinct moduli for $q>73$. This improves the results in the previous work of the author joint with Li, Wang and Yi.
\end{abstract}

\maketitle

\section{Introduction}\label{sec_intro}

Introduced by Erd\H{o}s \cite{Erdos1950} in 1950, a \textit{covering system} of the integers $\Z$ is a finite collection of infinite arithmetic progressions whose union is $\Z$. In the same article, Erd\H{o}s asked whether the minimum modulus of a covering system with distinct moduli is arbitrarily large. This is the so-called \textit{minimum modulus problem}. On this problem, Erd\H{o}s wrote ``This is perhaps my favorite problem" in \cite[page 166]{Erdos1995} and offered \$1000 for a proof or disproof \cite[page 467]{BBH1990}. In 2015, building on Filaseta, Ford, Konyagin, Pomerance and Yu's work \cite{FFSPY2007}, Hough \cite{Hough2015}  resolved this problem, showing that the minimum modulus is at most $10^{16}$ in any covering system of $\Z$ with distinct moduli. In 2022, Balister, Bollob\'as, Morris, Sahasrabudhe and Tiba \cite{BBMST2022} reduced Hough's bound to $616,000$. They gave a simpler and stronger variant of Hough's method, which is called the \textit{distortion method} \cite{BBMST2020_2}. This method has many applications in a number of variants on  Erd\H{o}s' minimum modulus problem. For example, using the distortion method, Cummings, Filaseta and Trifonov \cite{CFT2022} reduced Balister et al.'s bound to 118 for covering systems with distinct squarefree moduli. And Klein, Koukoulopoulos and Lemieux \cite{KKL2022} generalized Hough's result to covering systems of multiplicity. We refer readers to Balister's recent summary \cite{Balister2024} of these beautiful results. 

After these work, some progress on Erd\H{o}s covering systems in number fields, finite fields and global function fields has been made. Let $\F_q$ be a finite field of $q$ elements. Recently, Li, Wang and Yi \cite{LiWangYi2023pre} provided a solution to a generalization of Erd\H{o}s' minimum modulus problem in number fields. Then the authors in \cite{LiWangWangYi2023pre} resolved an analogue of Erd\H{o}s' minimum modulus problem for polynomial rings over finite fields in the degree aspect. This disproves a conjecture by Azlin in his master's thesis \cite{Azlin2011thesis}. Later, they \cite{LiWangWangYi2024} proved that there does not exist any covering system of $\mathbb{F}_q[x]$ with distinct moduli for $q\geq 759$. Indeed, they gave an upper bound on $q$ for covering systems in global function fields with a constant field $\F_q$. In this paper, we will improve the results in \cite{LiWangWangYi2024} by taking suitable parameters in the distortion method. Our main results are stated as follows.

Let $K$ be a global function field in one variable with a finite constant field $\F_q$. Let $g$ be the genus of $K$. 	Let $S$ be any non-empty finite set of primes of $K$, and let $O_S$ be the ring of $S$-integers in $K$. Then $O_S$ is a Dedekind domain. A \textit{covering system} of $O_S$ is a finite collection  $\mathcal{A}=\{A_{i}\colon 1\leq i\leq m\}$ of congruences in $O_S$ whose union is $O_S$, where $A_i=a_i+I_{i}$ for some $a_i\in O_S$ and some ideal $I_{i}\subseteq O_S$. 
Let 
\[
    m(\cA)=\max_{ \text{ideals } I \subset O_S}\#\{1\leq i\leq m\colon I_i=I\}
\]
be the multiplicity of $\cA$. 

\begin{theorem}\label{mainthm_gff}
Given an integer $s\geq1$. Let $K$ be a global function field with a constant field $\F_q$. Let $g$ be the genus of $K$.  Let $S$ be any non-empty finite set of primes of $K$, then there does not exist any covering system of $O_S$ with multiplicity $s$ for $q\geq (82.26+18.88g)e^{0.95g}s^2$. 
\end{theorem}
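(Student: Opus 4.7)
The plan is to implement the distortion method of Balister--Bollob\'as--Morris--Sahasrabudhe--Tiba in the function field setting, following the strategy of Li--Wang--Wang--Yi \cite{LiWangWangYi2024} but with carefully tuned parameters. Suppose for contradiction that $\mathcal{A} = \{a_i + I_i : 1 \leq i \leq m\}$ is a covering system of $O_S$ of multiplicity at most $s$ with $q \geq (82.26 + 18.88g)e^{0.95g}s^2$. Enumerate the prime ideals $\fp_1,\fp_2,\dots$ dividing some $I_i$ in non-decreasing order of norm $N(\fp_j)=q^{\deg\fp_j}$, and let $Q_k$ be a product of the first $k$ primes with exponents large enough that every $I_i$ supported on $\{\fp_1,\dots,\fp_k\}$ divides $Q_k$. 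The goal is to construct a sequence of probability measures $\mu_k$ on $O_S/Q_k$ whose supports avoid the residues already covered by $\mathcal{A}_k:=\{a_i+I_i : I_i\mid Q_k\}$, and to derive a contradiction from the requirement that eventually all residues are covered.

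Concretely, starting from the point mass $\mu_0$ on $O_S/Q_0$, the passage from $\mu_{k-1}$ to $\mu_k$ is carried out in two steps: first, pull back to $O_S/Q_k$ along the natural projection and condition on avoiding the congruences introduced at stage $k$; second, apply a Fourier-analytic \emph{distortion} that forces the measure to be nearly uniform along the new prime $\fp_k$ so that the correlations between distinct stages can be controlled. Following the framework of \cite{LiWangWangYi2024}, at stage $k$ one obtains mass-loss bounded by $s \cdot |\mathcal{A}_k \setminus \mathcal{A}_{k-1}|/|O_S/Q_k|$ together with an $L^2$ distortion budget governed by a small parameter $\delta_k$. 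If $\mathcal{A}$ covers $O_S$, then the limiting measure must have total mass zero, yielding
\[
    1 \leq \sum_{k\geq 1}(\text{loss at stage }k) + (\text{accumulated Fourier error}),
\]
and the whole task reduces to showing the right-hand side is strictly less than $1$ under the hypothesis on $q$.

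The quantitative input is the Weil bound: the number $a_K(d)$ of prime ideals of $K$ of degree $d$ satisfies $|d\cdot a_K(d) - q^d| \leq (2g+1)q^{d/2}+O(q^{d/3})$, so
\[
    \sum_{\deg\fp\geq D}\frac{1}{N(\fp)}=\sum_{d\geq D}\frac{a_K(d)}{q^d}\leq \sum_{d\geq D}\frac{1}{d}\left(1+\frac{2g+1}{q^{d/2}}+O(q^{-2d/3})\right),
\]
which is essentially geometric past the first term. The main obstacle, and the source of the explicit constants $82.26,\,18.88,\,0.95$, is a joint optimization over (i) the degree cutoff $D_0$ at which the distortion method ``turns on'', (ii) the per-stage loss budget linked to $s$, and (iii) the $\delta_k$ governing the $L^2$ growth. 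The additive constant $82.26$ captures the genus-zero case and is consistent with the bound $82.26\,s^2$ for $\F_q[x]$; the factor $(18.88\,g)e^{0.95g}$ encodes the worst-case contribution of the Weil error term $(2g+1)q^{d/2}$ accumulated over the finitely many low-degree primes before geometric decay dominates. I expect the structural parts of the argument — the construction of $\mu_k$ and the Fourier control of its $L^2$ norm on $O_S/Q_k$ — to mirror \cite{LiWangWangYi2024}, and the genuinely new work to be the sharpened parameter choice that yields the improved threshold on $q$.
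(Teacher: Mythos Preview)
Your proposal correctly identifies the general strategy---distortion method plus Weil bounds plus parameter tuning---but it is a plan rather than a proof, and several of its structural details do not match how the argument actually runs.

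First, the version of the distortion method in play here (following Klein--Koukoulopoulos--Lemieux, as set up in Section~\ref{sec_distortion_method}) is purely probabilistic: there is no Fourier step, no ``degree cutoff $D_0$ at which the method turns on,'' and no separate first-moment ``mass loss'' term. The single criterion is that if $\sum_{j=1}^J M_j^{(2)}/(4\delta_j(1-\delta_j)) < 1$ then $\mathcal{A}$ does not cover (Theorem~\ref{thm_distortion_method}). Everything reduces to bounding this weighted sum of second moments, so the displayed inequality in your plan with two separate contributions is not the right target.

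Second, and more importantly, the entire content of the theorem is the numerical constant, and your proposal does not say which parameters are being optimized or how. The only free parameters are the $\delta_j \in [0,\tfrac12]$. The paper's improvement over \cite{LiWangWangYi2024} comes from letting $\delta_j$ depend on the \emph{degree} of $P_j$: one sets $\delta_j = t_n$ whenever $\deg P_j = n$ and then optimizes over the sequence $(t_n)$ (Proposition~\ref{prop_second_moments_summation}). The dominant contribution is $n=1$; minimizing a function of the shape $\tfrac{c}{t_1(1-t_1)}\exp\{c'/(1-t_1)\}$ yields $t_1 \approx 0.17$, while for $n \geq 2$ one takes $t_n = 0.25$. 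The constants $82.26$, $18.88$, and $0.95$ then come from explicit numerics with these choices at $q=70$ (Theorem~\ref{thm_total_second_moments}); in particular the factor $e^{0.95g}$ arises from the $g$-dependence inside the exponential in Lemma~\ref{lem_second_moments} after substituting $\pi_{K,S}(1) \leq q+1+2g\sqrt{q}$, not from an accumulated Weil error over many low-degree primes as you suggest. Without identifying this degree-dependent choice of $\delta_j$ and carrying out the resulting one-variable minimizations, you cannot reach the stated bound; the sketch as written would at best reproduce the older constants.
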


\begin{remark}
	The bound in the previous result \cite[Theorem 1.1]{LiWangWangYi2024} is $(1.14+0.16g)e^{6.5+0.97g}s^2$, which is approximately equal to $(758.26+106.42g)e^{0.97g}s^2$. Hence this bound is reduced by about $89\%$ for $g=0$ in Theorem~\ref{mainthm_gff}.
\end{remark}

If $O_S=\F_q[x]$ is a polynomial ring over $\F_q$, then the genus $g=0$ in this case. If the moduli of a covering system is distinct, then the multiplicity $s=1$. As a corollary of Theorem~\ref{mainthm_gff}, we obtain that there does not exist any covering system of $\F_q[x]$ with distinct moduli for $q\geq 83$. By slightly modifying the proof of Theorem~\ref{mainthm_gff}, we get a better result as follows.

\begin{theorem}\label{mainthm_ff}
There is no covering system of $\F_q[x]$ with distinct moduli for $q>73$.
\end{theorem}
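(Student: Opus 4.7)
The plan is to revisit the proof of Theorem~\ref{mainthm_gff} specialised to $O_S=\F_q[x]$ (so the genus is $g=0$) and to distinct moduli (so the multiplicity is $s=1$), and to tighten every step that loses information under these two extra hypotheses. Specialising Theorem~\ref{mainthm_gff} with $g=0$, $s=1$ already gives $q\ge 83$, so the task is to identify and remove the slack that amounts to a factor of roughly $83/74\approx 1.12$ in $q$. I expect the distortion method to produce, for any hypothetical covering system $\cA=\{a_i+(f_i)\}$ of $\F_q[x]$ with distinct moduli, an inequality of the schematic form $\sum_i w(f_i)\ge 1$, where $w(f)=\prod_{P^k\|f}w_P(k)$ is a multiplicative function on monic polynomials whose local factors depend on parameters of the method. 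Distinctness of the moduli lets us bound the left-hand side by $\sum_{f\text{ monic}}w(f)=\prod_P\bigl(1+\sum_{k\ge 1}w_P(k)\bigr)$, reducing the problem to forcing this Euler product to be strictly less than~$1$ for every prime power $q>73$.

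First I would re-run the distortion-method proof of Theorem~\ref{mainthm_gff} with the polynomial-ring specific data inserted. In $\F_q[x]$ the number of monic polynomials of degree $n$ is exactly $q^n$, and the number of monic irreducibles of degree $n$ is $\pi_q(n)=\frac{1}{n}\sum_{d\mid n}\mu(d)q^{n/d}$. The general genus $g$ argument is constrained to use a Weil-type upper bound on $\pi_q(n)$, which is far from sharp when $g=0$, so substituting the exact value is an immediate gain. Similarly, the zeta function of $\F_q[x]$ has the closed form $\zeta_{\F_q[x]}(\sigma)=(1-q^{1-\sigma})^{-1}$, which allows any tail of the Euler product to be evaluated exactly rather than merely estimated.

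The second source of improvement is re-optimising the internal parameters of the distortion method (the threshold separating ``small'' from ``large'' moduli, and the smoothing/distortion rate) specifically for the pair $(g,s)=(0,1)$; in the proof of Theorem~\ref{mainthm_gff} these are presumably fixed to give a clean bound uniformly in $(g,s)$, which is not optimal at any single value. Concretely, I would fix a degree threshold $D$ as a function of $q$, split the Euler product as $\prod_{\deg P\le D}\cdot\prod_{\deg P>D}$, evaluate the head exactly using $\pi_q(n)$ for $n\le D$, and bound the tail by a truncation of $\log \zeta_{\F_q[x]}$. The main obstacle is the numerical verification: because the target $q>73$ is a tight improvement, one must check that the resulting bound $F(q)<1$ holds for \emph{every} prime power $q\in\{74,79,81,83,\ldots\}$, which likely requires a uniform argument for $q$ above some moderate $q_0$ together with case-by-case computation for the finitely many prime powers lying in $(73,q_0]$; balancing $D$ against $q$ to keep both the head and the tail of the Euler product under control will be the delicate part.
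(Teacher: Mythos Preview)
Your high-level intuition---specialise to $g=0$, $s=1$, replace Weil-type upper bounds by exact prime counts, and exploit that $q$ must be a prime power---is exactly right, but the concrete plan misreads the output of the distortion method and is far more elaborate than what is needed.

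The method here does not produce an inequality of the form $\sum_i w(f_i)\ge 1$ with a multiplicative weight on the moduli $f_i$. The quantity to control is $\sum_{j=1}^J M_j^{(2)}/(4\delta_j(1-\delta_j))$, a sum indexed by the \emph{prime ideals} $P_j$ dividing $Q$, already packaged in Proposition~\ref{prop_second_moments_summation} as a sum over degrees $n$ weighted by $\pi_{K,S}(n)$. There is no Euler product over moduli to split into head and tail, no degree threshold $D$ to tune, and no case-by-case verification of individual $q$.

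The paper's argument is a two-line modification of the proof of Theorem~\ref{thm_total_second_moments}. Keep the same parameters $t_1=0.17$, $t_n=0.25$ for $n\ge 2$, and keep the bound $S_2\le 3.17$ unchanged. In the $S_1$ term, replace the Weil upper bound $\pi_{K,S}(1)\le q+1+2g\sqrt{q}$ by the exact value $\pi_{K,S}(1)=q$ (there are exactly $q$ monic linear polynomials in $\F_q[x]$). This single substitution drops $S_1$ from about $79.08$ to $74.62$ for $q\ge 70$, giving $S_1+S_2\le 77.79$ and hence, by Theorem~\ref{thm_distortion_method}, no covering system for $q\ge 78$. The final step is the observation that no prime power lies in $\{74,75,76,77\}$, so $q>73$ suffices. (Your list ``$74,79,81,83,\ldots$'' of prime powers to check includes $74$, which is not a prime power; that gap is precisely what closes the argument.)

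So no re-optimisation of the $\delta_j$, no new Euler-product decomposition, and no numerical sweep over small $q$ are required: the improvement is entirely the substitution $\pi_{K,S}(1)=q$ together with the prime-power gap above $73$.
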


For the proof of Theorem~\ref{mainthm_gff}, we follow the approaches in the works \cite{KKL2022, LiWangYi2023pre} and apply the distortion method as in \cite{LiWangWangYi2024}. In our previous work \cite{LiWangWangYi2024}, we take $\delta_1=\cdots=\delta_J=0.5$ in the distortion method, and the referee pointed out that this is not sharp. In this paper, by modifying the upper bound on the second moments, we take a  suitable choice of the $\delta_j$'s to get a better result. In particular, the second moments for prime ideals of degrees $1$ and $2$ will be the main part. These analysis helps us to get better bounds in Theorem~\ref{mainthm_gff} than \cite[Theorem 1.1]{LiWangWangYi2024} .

In Section~\ref{sec_pre}, we review the Riemann hypothesis for function fields and introduce Klein et al.'s modification of the distortion method in the setting of Dedekind domains. Then in Section~\ref{sec_second_moments}, we provide an explicit upper bound for the summation of the second moments. At the end, we use the upper bound on the number of prime ideals in $O_S$ to give a proof of Theorems~\ref{mainthm_gff} and \ref{mainthm_ff} in the last section.

\section{Preliminaries}\label{sec_pre}

\subsection{The Riemann hypothesis for function fields}

In this subsection, we review some facts in global function fields and refer readers to Rosen's book \cite[Chapter 5]{Rosen2002} for more details. Let $K$ be a global function field in one variable with a constant field $\F_q$. Suppose the genus of $K$ is $g$. Let $\zeta_K(s)=\sum_{A\ge0}|A|^{-s}$ be the zeta function of $K$, where the summation runs over all effective divisors $A$ of $K$, and $|A|$ denotes the norm of $A$. Then there is a polynomial $L_K(u)\in\Z[u]$ of degree $2g$ such that
 $$\zeta_K(s)=\frac{L_K(q^{-s})}{(1-q^{-s})(1-q^{1-s})}$$
 for $\Re s>1$. This gives an analytic continuation of $\zeta_K(s)$ to $\C$, and $\zeta_K(s)$ has simple poles at $s=0,1$. Setting $\xi_K(s)=q^{(g-1)s}\zeta_K(s)$, one has the functional equation $\xi_K(1-s)=\xi_K(s)$ for all $s$. Let $\rho_1,\dots, \rho_{2g}$ be the inverse roots of $L_K(u)$, i.e., 
 $$L_K(u)=\prod_{i=1}^{2g}(1-\rho_i u).$$
 
 By the celebrated work of Weil \cite{Weil1948}, the following Riemann hypothesis holds for the zeta function of function fields.
 
 \begin{theorem}[The  Riemann hypothesis for function fields]\label{thm_RH_ff}
 	All the roots of $\zeta_K(s)$ lie on the line $\Re s=1/2$. Equivalently, the inverse roots of $L_K(u)$ all have absolute value $\sqrt{q}$.
 \end{theorem}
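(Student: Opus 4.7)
The plan is to follow either Weil's classical proof \cite{Weil1948}, which uses the theory of abelian varieties, or the later elementary polynomial method of Stepanov as refined by Bombieri; both routes reduce the statement to the Hasse--Weil bound on the number of $\F_q$-rational points of the smooth projective curve $C$ with function field $K$. I sketch the Stepanov--Bombieri approach below, which keeps the argument elementary in spirit.

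First I would translate the theorem into a statement about point counts. Setting $N_n = |C(\F_{q^n})|$, taking the logarithmic derivative of the Euler product for $\zeta_K(s)$ and comparing with the factorization $\zeta_K(s) = L_K(q^{-s})/((1-q^{-s})(1-q^{1-s}))$ yields
\[
N_n = q^n + 1 - \sum_{i=1}^{2g} \rho_i^n \qquad\text{for all } n \geq 1.
\]
Since $L_K(u) \in \Z[u]$ has constant term $1$, the $\rho_i$ are algebraic integers, and the assertion $|\rho_i| = \sqrt{q}$ for every $i$ is equivalent to the Hasse--Weil bound $|N_n - q^n - 1| \leq 2g \, q^{n/2}$ for every $n \geq 1$. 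Moreover, the functional equation $\xi_K(1-s) = \xi_K(s)$ pairs the $\rho_i$ into groups satisfying $\rho_i \rho_{i'} = q$, so that once the one-sided bound $\max_i |\rho_i| \leq \sqrt{q}$ has been established, the matching lower bound $\min_i |\rho_i| \geq \sqrt{q}$ is automatic.

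Next I would prove this one-sided bound via Stepanov's auxiliary function construction. The inequality $\max_i |\rho_i| \leq \sqrt{q}$ is implied by $N_n \leq q^n + 1 + 2g\, q^{n/2}$ for all $n$, which after base change to $\F_{q^n}$ reduces to the case $n=1$. To prove $N_1 \leq q + 1 + 2g \sqrt{q}$, pick a separating element $x \in K$ and seek a nonzero rational function of the shape $f = \sum_i g_i(x)\, h_i(x)^q$, with $g_i, h_i$ polynomials of controlled degree, engineered so that $f$ vanishes to order at least $\ell$ at every $\F_q$-point of $C$. A Riemann--Roch dimension count guarantees the existence of such an $f$ as soon as the $\ell N_1$ imposed zero conditions are fewer than $\dim \cL(D)$ for an appropriate divisor $D$, while an independent upper bound on the pole divisor of $f$ forces $\ell N_1 \leq \deg D$. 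Balancing these against the genus contribution $g$ in Riemann--Roch delivers the desired bound on $N_1$.

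The main obstacle is calibrating the polynomial construction to obtain the \emph{sharp} constant $2g$, rather than merely a bound of the form $Cg$ with unspecified $C$; this is the substantive content of Bombieri's refinement, and requires a careful Riemann--Roch dimension count balanced against a sharp zero-counting estimate for $f$ in terms of its pole order. An alternative, more conceptual route is Weil's original proof, which identifies $\sum \rho_i^n$ with $\mathrm{Tr}(F^n \mid T_\ell J_C)$ for the $\ell$-adic Tate module of the Jacobian $J_C$, and then deduces the Riemann hypothesis from the positivity of the Rosati involution on $\mathrm{End}(J_C) \otimes \Q$. This avoids the delicate polynomial construction but demands significant input from the theory of abelian varieties not otherwise needed in this paper.
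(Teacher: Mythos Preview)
The paper does not prove this theorem at all: it is quoted as a black box, attributed to Weil \cite{Weil1948}, and only its consequence \eqref{eqn_pi_upper_bound} is used later. So there is no argument in the paper to compare your sketch against; you are supplying far more than the paper does, and your outline of the Stepanov--Bombieri route (reduce to Hasse--Weil point-count bounds, then build an auxiliary function via Riemann--Roch) is the standard elementary alternative to Weil's original abelian-variety proof.

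That said, one step of your reduction is not correct as written. You assert that the one-sided inequality $N_n \le q^n + 1 + 2g\,q^{n/2}$ for all $n$ already forces $\max_i |\rho_i| \le \sqrt{q}$. It does not: that upper bound on $N_n$ is equivalent to the one-sided estimate $\sum_i \rho_i^n \ge -2g\,q^{n/2}$, and this alone cannot rule out a real root $\rho_j > \sqrt{q}$ (for instance, the formal configuration $g=1$, $\rho_1=q$, $\rho_2=1$ satisfies the pairing $\rho_1\rho_2=q$ and gives $\sum\rho_i^n=q^n+1>0$, so the one-sided bound holds trivially while $|\rho_1|=q$). What actually implies $\max_i|\rho_i|\le\sqrt q$ is a \emph{two-sided} bound $|N_n-(q^n+1)|\le C\,q^{n/2}$, and the lower bound $N_n \ge q^n+1-C\,q^{n/2}$ is a genuinely separate ingredient. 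In Bombieri's treatment this lower bound is obtained by applying the Stepanov upper bound to an auxiliary cover (e.g.\ via a character-sum or twisting argument), after which your functional-equation pairing indeed upgrades $|\rho_i|\le\sqrt q$ to equality. Your sketch should make this extra step explicit rather than folding it into the upper-bound claim.
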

 	
 Let $\pi_K(n)$ be the number of primes of degree $n$ in $K$. Then the following analogue of the prime number theorem \cite[Theorem 5.12]{Rosen2002} holds for function fields:
\begin{equation}\label{eqn_pnt}
	\pi_K(n)=\frac{q^n}{n}+O\of{\frac{q^{n/2}}{n}}.
\end{equation}
Indeed, by the proof of \cite[Theorem 5.12]{Rosen2002}, we have the following relation between $\pi_K(n)$ and the inverse roots $\rho_i$
\begin{equation}\label{eqn_pnt_gff}
	\sum_{d|n} d\pi_K(d)=q^n+1-\sum_{i=1}^{2g}\rho_i^n
\end{equation}
 for all $n\ge1$. Then \eqref{eqn_pnt} is a consequence of Theorem~\ref{thm_RH_ff} by \eqref{eqn_pnt_gff}.

Let $S$ be any non-empty finite set of primes of $K$ and  $O_S$ be the ring of $S$-integers in $K$.  Let $\pi_{K,S}(n)$ be the number of prime ideals of degree $n$ in $O_S$. Then $\pi_{K,S}(n)\leq \pi_K(n)$. Thus,  by \eqref{eqn_pnt_gff} we get an explicit upper bound of $\pi_{K,S}(n)$
\begin{equation}\label{eqn_pi_upper_bound}
 	 \pi_{K,S}(n)\leq \frac{q^n+1}{n}+\frac{2gq^{n/2}}{n}
\end{equation}
 for all $n\ge1$. This estimation will be used in the proof of Theorem~\ref{mainthm_gff} to bound the second moments in the distortion method.

\subsection{The distortion method}\label{sec_distortion_method}
In this subsection, following Klein et al.'s modification \cite{KKL2022}, we will briefly introduce the distortion method in the setting of Dedekind domains as illustrated in \cite{LiWangWangYi2024}. 

Let $R$ be a Dedekind domain satisfying that the quotient ring $R/I$ is finite for any nonzero ideal $I$ in $R$. Let  $|I|\colonequals |R/I|$ be the norm of $I$. Let $\mathcal{A}=\{A_{i}\colon 1\leq i\leq m\}$ be a finite collection of congruences in $R$, where $A_i=a_i+I_i, a_i\in R$ and $I_i$ is a nonzero ideal in $R$. Let
$$m(\cA)=\max_{ \text{Ideals } I \subset R}\#\{1\leq i\leq m\colon I_i=I\}$$
be the multiplicity of $\cA$.

Let $Q=I_1\cap\cdots\cap I_m$. Let  $Q=\prod_{i=1}^J P_i^{\nu_i}$ be the unique prime ideal decomposition of $Q$, where the $P_i$'s are distinct prime ideals and $|P_1|\leq |P_2|\leq \cdots \leq |P_J|$. Set $Q_j=\prod_{i=1}^jP_i^{\nu_i}, 1\leq j \leq J$ and $Q_0=R$.  For $1\le j \le J$, define 
\[
\cB_j\colonequals\bigcup_{\substack{1\le i\le m\\ I_i|Q_j, I_i\nmid Q_{j-1}}} \set{a+Q\colon a \equiv a_i \mod{I_i}}.
\]

Let $\pi_j\colon R/Q \to  R/Q_j$ be the natural projection, $0\leq j\leq J$. For any $a\in R$, we put $\bar{a}=a+Q\in R/Q$ and let 
\[
F_j(\bar{a})=\set{\bar{r}\in R/Q\colon \pi_j(\bar{r})=\pi_j(\bar{a})}
\]
be the fibre at $\bar{a}$. For any $\bar{a}\in R/Q$, let
\[
\alpha_j(\bar{a})=\frac{\# (F_{j-1}(\bar{a})\cap\cB_j)}{\#F_{j-1}(\bar{a})}
\]
be the portion of $\cB_j$ in the fibre $F_{j-1}(\bar{a})$.

Now, we inductively define a sequence of probability measures $\P_0, \P_1, \dots, \P_J$ on $R/Q$. Let $\P_0$ be the uniform probability measure. Let $\delta_1,\dots,\delta_J\in[0,\frac12]$. Assuming $\P_{j-1}$ is defined, we define $\P_j$ as follows:
\[
\P_j(\bar{a})\colonequals \P_{j-1}(\bar{a}) \cdot \left\{
\begin{aligned}
	&\frac{1_{\bar{a}\notin \cB_j}}{1-\alpha_j(\bar{a})} & \text{if }  \alpha_j(\bar{a})<\delta_j,\\
	&\frac{\alpha_j(\bar{a})-1_{\bar{a}\in \cB_j}\delta_j}{\alpha_j(\bar{a})(1-\delta_j)} & \text{if }  \alpha_j(\bar{a})\geq \delta_j.
\end{aligned}
\right.
\]

For any integer $k\geq1$, let
\begin{equation}\label{eqn_kth_moment_definition}
    M_j^{(k)}\colonequals\sum_{\bar{a}\in R/Q}\alpha_j^k(\bar{a})\P_{j-1}(\bar{a}).
\end{equation}
be  the $k$-th moment of $\alpha_j$. By Balister et al.'s work \cite[Theorem~3.1]{BBMST2022} and Klein et al.'s work \cite{KKL2022}, the key result of the distortion method used in this paper is the following.

\begin{theorem}\label{thm_distortion_method}
With the notation as above,	for any $\delta_1,\dots,\delta_J\in[0,\frac12]$, if
	\[
	\sum_{j=1}^J\frac{M_j^{(2)}}{4\delta_j(1-\delta_j)}<1,
	\]
	then $\cA$ does not cover $R$. 
\end{theorem}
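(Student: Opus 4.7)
My plan is to follow the distortion-method argument of Hough~\cite{Hough2015}, Balister et al.~\cite{BBMST2022}, and Klein et al.~\cite{KKL2022}, adapted to the Dedekind-domain setting already set up in the excerpt. The goal is to show that the measure $\P_J$ assigns positive mass to the complement of $\mathcal{D}_J := \cB_1\cup\cdots\cup\cB_J$; since $\cA$ covers $R$ if and only if $\mathcal{D}_J = R/Q$ (every congruence has $I_i\mid Q$ and hence lies in some $\cB_j$), this forces $\cA$ not to cover $R$.

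First, I would prove by induction on $j$ that each $\P_j$ is a probability measure on $R/Q$ that is constant on the fibres of $\pi_j$. The base case is immediate from the uniformity of $\P_0$. For the inductive step, the key structural fact is that $\cB_j$ is a union of cosets of $Q_j$, so within each $F_{j-1}$-fibre it decomposes as a union of $F_j$-subfibres of density exactly $\alpha_j(\bar a)$. Combined with the inductive hypothesis that $\P_{j-1}$ is constant on each $F_{j-1}$-fibre (and that $\alpha_j(\bar a)$ is therefore constant on such a fibre), a direct case analysis of the two branches $\alpha_j<\delta_j$ and $\alpha_j\geq\delta_j$ shows that $\P_j$ is non-negative, constant on $F_j$-fibres, and has total $\P_{j-1}$-mass preserved on every $F_{j-1}$-fibre individually. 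I would record as a corollary the preservation property: $\P_j(A) = \P_{j-1}(A)$ whenever $A$ is a union of $F_{j-1}$-fibres.

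Next, set $W_j := 1 - \P_j(\mathcal{D}_j)$ and aim to show $W_{j-1} - W_j \leq M_j^{(2)}/(4\delta_j(1-\delta_j))$. Since $\mathcal{D}_{j-1}$ is a union of $F_{j-1}$-fibres, the preservation corollary gives $\P_j(\mathcal{D}_{j-1}) = \P_{j-1}(\mathcal{D}_{j-1})$, so that $W_{j-1} - W_j = \P_j(\cB_j\setminus\mathcal{D}_{j-1}) \leq \P_j(\cB_j)$. On any $F_{j-1}$-fibre with $\alpha_j<\delta_j$, the first branch of the recursive definition forces $\P_j\equiv 0$ on $\cB_j$, so only fibres with $\alpha_j\geq\delta_j$ contribute. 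On such a fibre, a short computation using the second branch gives that the $\P_j$-mass of $\cB_j$ equals $\tfrac{\alpha_j-\delta_j}{1-\delta_j}$ times the $\P_{j-1}$-mass of the fibre. The elementary inequality $(\alpha_j-2\delta_j)^2\geq 0$ rearranges to
\[
\frac{\alpha_j-\delta_j}{1-\delta_j}\;\leq\;\frac{\alpha_j^2}{4\delta_j(1-\delta_j)},
\]
and summing over all fibres against $\P_{j-1}$ yields $\P_j(\cB_j)\leq M_j^{(2)}/(4\delta_j(1-\delta_j))$ by the definition \eqref{eqn_kth_moment_definition}.

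Telescoping over $j=1,\dots,J$ then produces
\[
W_J \;\geq\; 1 - \sum_{j=1}^{J}\frac{M_j^{(2)}}{4\delta_j(1-\delta_j)} \;>\; 0
\]
under the hypothesis, whence $\mathcal{D}_J\neq R/Q$ and $\cA$ fails to cover $R$. I expect the main obstacle to be the fibre-invariance bookkeeping in the inductive step of paragraph two: one must verify uniformly across both branches of the case split $\alpha_j\lessgtr\delta_j$ that $\P_j$ remains a probability measure constant on $F_j$-fibres, and the structural compatibility between $\cB_j$ and the tower $R/Q\to R/Q_j\to R/Q_{j-1}$ has to be used carefully to make the density-$\alpha_j$ decomposition genuinely match the recursive definition. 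The arithmetic inequality driving the factor $4\delta_j(1-\delta_j)$ is a one-line rearrangement once noticed, and it is exactly what forces the restriction $\delta_j\in[0,\tfrac12]$ in the hypothesis.
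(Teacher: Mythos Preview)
The paper does not actually prove this theorem; it simply quotes it as the key input of the distortion method, citing \cite[Theorem~3.1]{BBMST2022} and \cite{KKL2022}. Your proposal is a correct and faithful write-up of precisely that cited argument, carried over verbatim to the Dedekind-domain setting: the induction showing each $\P_j$ is a probability measure constant on $F_j$-fibres, the mass-preservation on $F_{j-1}$-fibres, the bound $\P_j(\cB_j)\le M_j^{(2)}/(4\delta_j(1-\delta_j))$ via $(\alpha_j-2\delta_j)^2\ge 0$, and the final telescoping are exactly the steps in Balister et al.\ and Klein et al. So there is nothing to compare---you have supplied the proof the paper chose to outsource, and your version is sound.
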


 \section{Bounding the second moments}\label{sec_second_moments}
 
 In this section, we proceed to bound the second moments. In \cite[Lemma 3.4]{KKL2022} and \cite[Lemma 2.4]{LiWangWangYi2024}, the parameters $\delta_j$ are taken to be $\frac12$ for all $j$ during the estimation on $M_j^{(2)}$. Instead of doing so, we keep the parameters $\delta_j$ in the upper bound in the following estimate, which is similar to \cite[Theorem 3.2]{BBMST2022} and \cite[Corollary 4.6]{CFT2022}. This will help us find a suitable choice of $\delta_j$ in the proof of Theorem~\ref{mainthm_gff}.
 
\begin{lemma}\label{lem_second_moments}
For the second moments $M_j^{(2)}$, $1\leq j\leq J$ in Theorem~\ref{thm_distortion_method}, we have
\begin{equation}\label{lem_second_moments_eqn}
	 M_j^{(2)}\leq \frac{s^2}{(|P_j|-1)^2}  \exp\set{\sum_{i<j}\frac1{1-\delta_i}\Big(\frac{3}{|P_i|}+h(|P_i|)\Big)},
\end{equation}
 where $h(t)=\frac{5t-3}{t(t-1)^2}$ for $t\ge2$.
\end{lemma}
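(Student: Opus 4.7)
The strategy is to adapt the second-moment estimates from \cite[Theorem 3.2]{BBMST2022}, \cite[Lemma 3.4]{KKL2022} and \cite[Lemma 2.4]{LiWangWangYi2024}, while carrying the parameters $\delta_1,\dots,\delta_{j-1}$ through the computation as free variables rather than specialising them to $1/2$. The right-hand side of \eqref{lem_second_moments_eqn} splits naturally into two pieces: the prefactor $s^2/(|P_j|-1)^2$ should reflect the $P_j$-contribution to $\alpha_j$, while the exponential should accumulate from how each previous measure $\P_i$ deviates from the uniform measure $\P_0$, with step $i$ contributing a factor of the form $1+(1-\delta_i)^{-1}(3/|P_i|+h(|P_i|))$ that becomes exponential via $1+x\le e^x$.

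To control $\alpha_j(\bar a)$, I would use the Chinese remainder decomposition $R/Q\cong\prod_{i=1}^J R/P_i^{\nu_i}$. Every modulus $I_i$ appearing in $\cB_j$ factors uniquely as $I_i'P_j^{m}$ with $I_i'\mid Q_{j-1}$ and $1\le m\le\nu_j$; a direct count shows that the corresponding congruence meets the fibre $F_{j-1}(\bar a)$ in a set of density $|P_j|^{-m}$ when it is compatible with $\pi_{j-1}(\bar a)$ modulo $I_i'$ and is empty otherwise. Using the multiplicity bound of at most $s$ congruences per modulus and summing the geometric series $\sum_{m\ge1}s/|P_j|^m=s/(|P_j|-1)$, the diagonal $I_1'=I_2'=R$ part of the squared sum produces exactly $s^2/(|P_j|-1)^2$, while the off-diagonal pieces with nontrivial $I_i'$ contribute compatibility indicators that get absorbed into the exponential factor through the distortion analysis.

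For the exponential, I would take the expectation of the squared expansion of $\alpha_j$ under $\P_{j-1}$ and recursively unpack $\P_{j-1}$ via $\P_i(\bar a)=\P_{i-1}(\bar a)\cdot r_i(\bar a)$. The pointwise bound $r_i\le(1-\delta_i)^{-1}$, combined with the total-mass identity $\E_{i-1}[r_i]=1$, lets me bound the contribution of step $i$ by $1+(1-\delta_i)^{-1}c_i$, where $c_i$ is the $\P_0$-average of the correlation between the $\cB_i$-indicator and the level-$i$ part of the squared expansion of $\alpha_j$. The Chinese remainder product structure of $\P_0$ makes this correlation a sum over pairs of moduli, one involving $P_i$ and the other involving $P_j$; a direct computation using the multiplicity hypothesis then separates $c_i$ into a linear-in-$1/|P_i|$ piece contributing the $3/|P_i|$ term and a quadratic tail contributing $h(|P_i|)$. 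Multiplying over $i<j$ and applying $1+x\le e^x$ completes the bound.

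The main obstacle is precisely the sharpness of this split. In \cite[Lemma 2.4]{LiWangWangYi2024} the worst-case ratio $r_i\le 2$ (i.e., $\delta_i=1/2$) was used throughout, which is wasteful; here I must keep track of the averaged rather than worst-case behaviour of $r_i$, and carefully isolate the truly linear contribution $3/|P_i|$ from the higher-order terms collected into $h(|P_i|)$. The constants $3$ and $5$ directly affect the numerical bound $(82.26+18.88g)e^{0.95g}s^2$ in Theorem~\ref{mainthm_gff}, and in line with the introduction I expect the degree-$1$ and degree-$2$ primes to dominate the resulting exponent when it is summed against the prime-counting estimate \eqref{eqn_pi_upper_bound}.
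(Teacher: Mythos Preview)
Your overall plan is sound and ends at the same inequality, but it is considerably more complicated than what the paper actually does, and it rests on a small misconception about where the improvement over \cite[Lemma~2.4]{LiWangWangYi2024} comes from.

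The paper does not redo the distortion analysis at all. It simply quotes, from the \emph{proof} of \cite[Lemma~2.4]{LiWangWangYi2024}, the intermediate inequality
\[
M_j^{(2)}\le s^2\sum_{1\le r_1,r_2\le\nu_j}\sum_{H_1,H_2\mid Q_{j-1}}
\frac{1}{|H_1\cap H_2|\cdot|P_j|^{r_1+r_2}}
\prod_{\substack{i<j\\ P_i\mid H_1\cap H_2}}(1-\delta_i)^{-1},
\]
which already carries the general $\delta_i$'s. In that earlier paper the $\delta_i$'s were then set to $1/2$; here one simply refrains from doing so. There is no need to ``keep track of the averaged rather than worst-case behaviour of $r_i$'': the pointwise bound $r_i\le(1-\delta_i)^{-1}$, applied only on coordinates $P_i\mid H_1\cap H_2$, is exactly what produces the displayed inequality, and the ``$1+{}$'' structure you are after does not come from the mass identity $\E_{i-1}[r_i]=1$ but from the multiplicative factorisation of the double sum over $H_1,H_2$ (the case $P_i\nmid H_1\cap H_2$ contributes the $1$).

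After that, the argument is pure algebra: $\sum_{r_1,r_2\ge1}|P_j|^{-r_1-r_2}\le(|P_j|-1)^{-2}$ gives the prefactor, and since $|H_1\cap H_2|=\prod_i|P_i|^{\max(v_{P_i}(H_1),v_{P_i}(H_2))}$, the $H_1,H_2$-sum factorises as
\[
\prod_{i<j}\Bigl(1+\frac{1}{1-\delta_i}\sum_{\nu\ge1}\frac{2\nu+1}{|P_i|^{\nu}}\Bigr),
\]
where $2\nu+1$ counts pairs $(a,b)$ with $\max(a,b)=\nu$. The closed form $\sum_{\nu\ge1}(2\nu+1)t^{-\nu}=(3t-1)/(t-1)^2=3/t+h(t)$ and $1+x\le e^x$ finish the proof. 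So the constants $3$ and $h$ are not obtained by ``separating a linear piece from a quadratic tail'' in a correlation estimate; they drop out of a single explicit series.
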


\begin{proof} 
By the proof of \cite[Lemma 2.4]{LiWangWangYi2024}, we have
\begin{equation}\label{lem_second_moments_eqn1}
	M_j^{(2)}\leq s^2 \sum_{1\leq r_1, r_2\leq \nu_j} \sum_{H_1, H_2 \mid Q_{j-1} } \frac{1}{|H_1\cap H_2|\cdot|P_j|^{r_1+r_2}}\cdot\prod_{\substack{i\leq j-1\\ P_i\mid H_1 \cap H_2 }}(1-\delta_i)^{-1}.
\end{equation}

The summation over $r_1$ and $r_2$ is bounded by
\begin{equation}\label{lem_second_moments_eqn2}
	\sum_{1\leq r_1, r_2\leq \nu_j} \frac1{|P_j|^{r_1+r_2}} \leq \frac{1}{(|P_j|-1)^2}.
\end{equation}

As regards the summation over $H_1$ and $H_2$, notice that the product $\prod_{\substack{i<j\\ P_i\mid H }}(1-\delta_i)^{-1}$ is a multiplicative function on ideals $H$ in $R$. It follows that
\begin{align}
	&\sum_{H_1, H_2 \mid Q_{j-1} } \frac{1}{|H_1\cap H_2|}\cdot\prod_{\substack{i\leq j-1\\ P_i\mid H_1 \cap H_2 }}(1-\delta_i)^{-1} \nonumber\\
	&\leq \prod_{i<j}\left(1+\sum_{\nu\ge1}\frac{2\nu+1}{(1-\delta_i)|P_i|^\nu}\right) \nonumber\\
	&= \prod_{i<j}\left(1+\frac1{1-\delta_i}\Big(\frac{3}{|P_i|}+\frac{5|P_i|-3}{|P_i|(|P_i|-1)^2}\Big)\right) \nonumber\\
 	&\leq  \exp\set{ \sum_{i<j}\frac1{1-\delta_i}\Big(\frac{3}{|P_i|}+h(|P_i|)\Big)}. \label{lem_second_moments_eqn3}
\end{align}

Thus, \eqref{lem_second_moments_eqn} follows by combining \eqref{lem_second_moments_eqn1}-\eqref{lem_second_moments_eqn3} together.
\end{proof}

Now, we take $R=O_S$ to be the ring of $S$-integers in Lemma~\ref{lem_second_moments}. The following proposition shows a way on how to apply the upper bounds of $\pi_{K,S}(n)$ to estimate the total weighted second moments. 

\begin{proposition}\label{prop_second_moments_summation}
Let $t_1, t_2, \cdots $ be a sequence of parameters in $[0,\frac12]$. For $1\leq j\leq J$, take $\delta_j=t_n$ if $\deg P_j=n$.	Then we have
\begin{equation}\label{eqn_prop_second_moments_summation}
	\sum_{j=1}^J\frac{M_j^{(2)}}{4\delta_j(1-\delta_j)}\leq  \frac{s^2}q\sum_{n=1}^\infty \frac{1}{4t_n(1-t_n)} \frac{q\pi_{K,S}(n)}{(q^n-1)^2} \exp\set{\sum_{k=1}^n\frac{\pi_{K,S}(k)}{1-t_k} \big(\frac{3}{q^k}+ h(q^k)\big)}.
\end{equation}	
\end{proposition}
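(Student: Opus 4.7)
The plan is to start from the per-prime bound in Lemma~\ref{lem_second_moments} and reorganise the sum $\sum_{j=1}^{J}$ by the degree $n=\deg P_j$. Since every prime of $O_S$ of degree $n$ has norm $q^n$, the assignment $\delta_j=t_n$ whenever $\deg P_j=n$ turns $(|P_j|-1)^{-2}$ into $(q^n-1)^{-2}$ and $(4\delta_j(1-\delta_j))^{-1}$ into $(4t_n(1-t_n))^{-1}$, so that each term of the left-hand side of \eqref{eqn_prop_second_moments_summation} takes the form
\[
\frac{M_j^{(2)}}{4\delta_j(1-\delta_j)}\leq\frac{1}{4t_n(1-t_n)}\cdot\frac{s^2}{(q^n-1)^2}\cdot\exp\!\set{\sum_{i<j}\frac{1}{1-\delta_i}\Big(\frac{3}{|P_i|}+h(|P_i|)\Big)}.
\]

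The next step is to bound the exponent uniformly in $j$ once $n=\deg P_j$ is fixed. Because the primes are ordered so that $|P_1|\leq\cdots\leq|P_J|$, every $i<j$ satisfies $\deg P_i\leq n$. Grouping the indices $i<j$ by their degree $k\leq n$, and using that there are at most $\pi_{K,S}(k)$ prime ideals of degree $k$ in $O_S$, I obtain
\[
\sum_{i<j}\frac{1}{1-\delta_i}\Big(\frac{3}{|P_i|}+h(|P_i|)\Big)\leq\sum_{k=1}^{n}\frac{\pi_{K,S}(k)}{1-t_k}\Big(\frac{3}{q^k}+h(q^k)\Big),
\]
which no longer depends on the individual index $j$, only on $n$.

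Finally, I would partition $\{1,\ldots,J\}$ by the value $n=\deg P_j$. Since $\#\{j\colon\deg P_j=n\}\leq\pi_{K,S}(n)$, summing the uniform bound above over each degree class and then over $n\geq1$ converts $\sum_{j=1}^{J}$ into $\sum_{n=1}^{\infty}$ with weight $\pi_{K,S}(n)$, producing
\[
\sum_{j=1}^{J}\frac{M_j^{(2)}}{4\delta_j(1-\delta_j)}\leq\sum_{n=1}^{\infty}\frac{1}{4t_n(1-t_n)}\cdot\frac{s^2\,\pi_{K,S}(n)}{(q^n-1)^2}\exp\!\set{\sum_{k=1}^{n}\frac{\pi_{K,S}(k)}{1-t_k}\Big(\frac{3}{q^k}+h(q^k)\Big)},
\]
and multiplying the right-hand factor by $q/q$ gives exactly \eqref{eqn_prop_second_moments_summation}.

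This argument is essentially bookkeeping; the only point deserving care is that the inner sum must include the term $k=n$, because several primes of the common degree $n$ may precede the current $P_j$ when $\deg P_j=n$. This matches the range $\sum_{k=1}^{n}$ in the statement, so no modification is needed, and I do not foresee any substantive obstacle beyond verifying this indexing.
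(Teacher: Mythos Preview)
Your proposal is correct and follows essentially the same approach as the paper: apply Lemma~\ref{lem_second_moments}, overcount the exponent by replacing $\sum_{i<j}$ with the sum over all primes of degree $\le n=\deg P_j$ grouped by degree, and then partition the outer sum by $n$ using $\#\{j:\deg P_j=n\}\le\pi_{K,S}(n)$. Your explicit remark about including $k=n$ in the inner sum is exactly the step the paper handles by passing from $\sum_{i<j}$ to $\sum_{\deg P_i\le n}$.
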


 \begin{proof} 
 First, for the summation in the exponent part of \eqref{lem_second_moments_eqn}, we have
 \begin{align}
 	\sum_{\deg P_i\leq n}\frac1{1-\delta_i}\cdot \frac1{|P_i|} &\leq \sum_{k=1}^n\frac1{1-t_k} \sum_{\deg P=k}\frac1{|P|}=\sum_{k=1}^n\frac{\pi_{K,S}(k)}{1-t_k}\cdot \frac{1}{q^k},\\
\sum_{\deg P_i\leq n}\frac{1}{1-\delta_i}\cdot h(|P_i|)&\leq \sum_{k=1}^n \frac1{1-t_k} \sum_{\deg P=k}h(|P|)= \sum_{k=1}^n \frac{\pi_{K,S}(k)}{1-t_k}\cdot h(q^k).
 \end{align}

 Then, by  \eqref{lem_second_moments_eqn} we get that
 	\begin{align}
 		&\quad\sum_{j=1}^J\frac{M_j^{(2)}}{4\delta_j(1-\delta_j)} \nonumber\\
 	&\leq  	\sum_{j=1}^J\frac{s^2}{4\delta_j(1-\delta_j)(|P_j|-1)^2} \exp\set{\sum_{i<j}\frac1{1-\delta_i}\Big(\frac{3}{|P_i|}+h(|P_i|)\Big)} \nonumber\\
 	&= \sum_{n=1}^\infty \sum_{\deg P_j=n} \frac{s^2}{4\delta_j(1-\delta_j)(|P_j|-1)^2}\exp\set{\sum_{i<j}\frac1{1-\delta_i}\Big(\frac{3}{|P_i|}+h(|P_i|)\Big)} \nonumber\\
 	&\leq \sum_{n=1}^\infty \frac{ \pi_{K,S}(n)s^2}{4t_n(1-t_n)(q^n-1)^2} \exp\set{\sum_{\deg P_i\leq n}\frac1{1-\delta_i}\Big(\frac{3}{|P_i|}+h(|P_i|)\Big)} \nonumber\\
 	&\leq  \sum_{n=1}^\infty \frac{\pi_{K,S}(n)s^2}{4t_n(1-t_n)(q^n-1)^2}\exp\set{\sum_{k=1}^n\frac{\pi_{K,S}(k)}{1-t_k} \big(\frac{3}{q^k}+ h(q^k)\big)},
 	\end{align}
which gives \eqref{eqn_prop_second_moments_summation}.
 \end{proof}

\section{Proof of Theorems~\ref{mainthm_gff} and \ref{mainthm_ff}}
 
In this section, we will apply Proposition~\ref{prop_second_moments_summation} and the upper bound \eqref{eqn_pi_upper_bound} of $\pi_{K,S}(n)$ to estimate the weighted sum of the second moments.

\begin{theorem}\label{thm_total_second_moments}
	Take $t_1=0.17, t_n=0.25$ for all $n\ge2$ in Proposition~\ref{prop_second_moments_summation}. Then for $q\ge 70$, we have
	\begin{equation}\label{thm_total_second_moments_eqn}
		\sum_{j=1}^J\frac{M_j^{(2)}}{4\delta_j(1-\delta_j)}< \frac{(82.26+18.88g)e^{0.95g}s^2}{q}.
	\end{equation}
\end{theorem}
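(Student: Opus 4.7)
The plan is to insert the explicit estimate \eqref{eqn_pi_upper_bound} into the right-hand side of \eqref{eqn_prop_second_moments_summation} and to decompose every sum into a $g$-independent piece (arising from $(q^k+1)/k$) and a piece linear in $g$ (from $2g q^{k/2}/k$). With the choices $t_1=0.17$ and $t_n=0.25$ for $n\ge 2$, the inner weight $1/(1-t_k)$ equals $1/0.83$ at $k=1$ and $4/3$ for $k\ge 2$, while the outer prefactor $1/(4t_n(1-t_n))$ equals $1/(4\cdot 0.17\cdot 0.83)$ at $n=1$ and $4/3$ for $n\ge 2$.

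First I bound the exponential factor uniformly in $n$. Since $h(t)=O(1/t)$, one has $3/q^k+h(q^k)=O(q^{-k})$, and the inner sum $\sum_{k=1}^{n}\pi_{K,S}(k)(3/q^k+h(q^k))/(1-t_k)$ is controlled by its $k=1$ and $k=2$ contributions plus an explicit geometric tail. Splitting according to the two parts of \eqref{eqn_pi_upper_bound} yields a uniform bound of the form $C(q)+g\,D(q)$, valid for every $n$. The $g$-part is dominated by $k=1$, since the quantity $2q^{k/2}/(kq^k)=2/(kq^{k/2})$ is maximal there; for $q\ge 70$ one verifies $D(q)\le 0.95$ by computing the $k=1,2$ terms exactly and dominating the tail $k\ge 3$ by a convergent geometric series. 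Consequently the exponential factor in \eqref{eqn_prop_second_moments_summation} is at most $e^{C(q)}e^{0.95 g}$.

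Next I bound the outer sum $\sum_{n}\pi_{K,S}(n)/(4t_n(1-t_n)(q^n-1)^2)$ by $A(q)+g\,B(q)$ via the same splitting; again only $n=1,2$ give non-negligible contributions, and the remaining tail is geometric in $q^{-n}$. Multiplying through yields
\[
\sum_{j=1}^{J}\frac{M_j^{(2)}}{4\delta_j(1-\delta_j)}\;\le\;\frac{s^2}{q}\bigl(A(q)+g\,B(q)\bigr)\,e^{C(q)}\,e^{0.95 g}.
\]
All three quantities $A(q)e^{C(q)}$, $B(q)e^{C(q)}$, and $D(q)$ are decreasing in $q$, so it suffices to evaluate them at $q=70$ and to verify numerically that $A(70)e^{C(70)}\le 82.26$ and $B(70)e^{C(70)}\le 18.88$.

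The main obstacle is this final numerical calibration. The choice $t_1=0.17<t_n=0.25$ is made precisely to shrink $1/(1-t_1)$ and thereby the dominant $k=1$ contribution to the exponent; the price is a larger $1/(4t_1(1-t_1))$ prefactor in the $n=1$ term of the outer sum. Showing that this trade-off is favourable at $q=70$, and that the three target constants $82.26$, $18.88$, $0.95$ are met simultaneously, amounts to careful but routine explicit bookkeeping of the $n,k\in\{1,2\}$ terms together with crude geometric envelopes for the remaining tails.
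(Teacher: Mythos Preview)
Your plan has a genuine gap at the first step. You claim that the inner exponent
\[
\sum_{k=1}^{n}\frac{\pi_{K,S}(k)}{1-t_k}\Bigl(\frac{3}{q^k}+h(q^k)\Bigr)
\]
is ``controlled by its $k=1$ and $k=2$ contributions plus an explicit geometric tail'' and hence admits a bound $C(q)+gD(q)$ valid for every $n$. This is false. By the prime number theorem for function fields one has $\pi_{K,S}(k)\sim q^{k}/k$ (indeed $\pi_{K,S}(k)=\pi_K(k)$ for all but finitely many $k$, since $S$ is finite), so the $g$-independent part of the $k$-th term is asymptotically
\[
\frac{1}{1-t_k}\cdot\frac{q^k}{k}\cdot\frac{3}{q^k}=\frac{3}{k(1-t_k)}=\frac{4}{k}\quad(k\ge2),
\]
which is \emph{harmonic}, not geometric. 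The partial sums therefore grow like $4\log n$, and no uniform-in-$n$ constant $C(q)$ exists; your factorisation $(A(q)+gB(q))e^{C(q)}e^{0.95g}$ collapses.

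The paper's proof does not attempt a uniform bound on the exponent. It splits the outer sum into the single term $S_1$ at $n=1$ and the tail $S_2$ over $n\ge2$, and for $S_2$ it \emph{keeps} the $n$-dependence of the exponent, bounding $\sum_{k=2}^{n}4/k\le 4(\log n+\tfrac12-\log 2)$. This produces a polynomial factor $n^{4}$ (after combining with the $1/n$ already present, a net $n^{3}$), which is then absorbed by the genuine exponential decay $q^{-n}$ of the outer summand $\pi_{K,S}(n)/(q^n-1)^2\sim 1/(nq^{n})$. In short, the decay needed to close the argument lives in the outer sum, not in the inner one, and your decoupling of the two destroys exactly the cancellation that makes the estimate work.
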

\begin{proof}
	
 For the summation of the right hand side of \eqref{eqn_prop_second_moments_summation}, we denote by $S_1$ the term $n=1$ and $S_2$ the summation over $n\geq2$. That is,
 \begin{equation}\label{thm_total_second_moments_eqn_S1S2}
 	\sum_{n=1}^\infty \frac{1}{4t_n(1-t_n)} \frac{q\pi_{K,S}(n)}{(q^n-1)^2} \exp\set{\sum_{k=1}^n\frac{\pi_{K,S}(k)}{1-t_k} \big(\frac{3}{q^k}+ h(q^k)\big)}= \sum_{n=1}+\sum_{n\ge2}:=S_1+S_2.
 \end{equation} 
 
 Now we use the upper bounds \eqref{eqn_pi_upper_bound} of $\pi_{K,S}(n)$ to estimate $S_1$ and $S_2$ respectively. 
 
 For $S_1$,  by $\pi_{K,S}(1)\leq q+1+2g\sqrt{q}$ we have
 \begin{equation}
 	S_1\leq S_1(q):=\frac{q(q+1+2g\sqrt{q})}{4t_1(1-t_1)(q-1)^2}\cdot\exp\set{\frac{q+1+2g\sqrt{q}}{1-t_1} \big(\frac3q+h(q)\big)}. 
 \end{equation}
 Then for $q\ge70$, we have
 \begin{equation}
 	S_1\leq S_1(70)\leq \frac{0.261+0.062g}{t_1(1-t_1)}\exp\set{\frac{3.117+0.735g}{1-t_1}}.
 \end{equation}
 
 The minimum point of the function $\frac{0.261}{t_1(1-t_1)}\exp\set{\frac{3.117}{1-t_1}}$ in $[0,\frac12]$ is at $t_1=0.1732\cdots$. Taking $t_1=0.17$, we get that
 \begin{equation}\label{eqn_S1}
 	S_1\leq (79.082+18.786g)e^{0.886g}.
 \end{equation}

 For $S_2$, we take $t_n=0.25$ for all $n\ge2$. Using the fact that 
 $$\frac{1}{(q^n-1)^2}\leq \frac{1.001}{q^{2n}}, h(q^n)\leq \frac{5.002}{q^{2n}} \text{ and }  \pi_{K,S}(n)\leq \frac{q^n+1}{n}+\frac{2gq^{n/2}}{n}$$
 for all $n\geq 2$ and $q\ge70$, we get 
 \begin{multline}
 	 	S_2\leq S_2(q):=1.001\sum_{n=2}^\infty \frac{q^n+1+2gq^{n/2}}{0.75\cdot nq^{2n-1}}\cdot\exp\left\{\frac{q+1+2g\sqrt{q}}{1-t_1} \big(\frac3q+h(q)\big)\right.\\ \left. +\sum_{k=2}^n\frac{q^k+1+2gq^{k/2}}{0.75\cdot k} \big(\frac3{q^k}+\frac{5.002}{q^{2k}}\big) \right\}.
 \end{multline}
  Then for $q\ge70$, we have
  \begin{align}
  	S_2&\leq S_2(70) \leq 1.001\sum_{n=2}^\infty \frac{70^n+1+2\cdot70^{n/2}g}{0.75\cdot n\cdot70^{2n-1}} \exp\left\{\frac{3.117+0.735g}{1-t_1} +\sum_{k=2}^n\frac4k\right. \nonumber\\
  	& \qquad \left.  +\sum_{k=2}^\infty \frac1{0.75\cdot k} \big(\frac{8.002}{70^k}+\frac{5.002}{70^{2k}}\big) +2g\sum_{k=2}^\infty \frac1{0.75\cdot k}\big(\frac3{70^{k/2}}+\frac{5.002}{ 70^{3k/2}}\big)\right\} \nonumber\\
  	&\leq 1.001\sum_{n=2}^\infty \frac{70^n+1+2\cdot70^{n/2}g}{0.75\cdot n\cdot70^{2n-1}}\exp\left\{\frac{3.117+0.735g}{0.83}\right. \nonumber\\
  	& \qquad \left. +4(\log n+\frac12-\log 2) +0.002 +0.063g\right\}  \nonumber\\
  	&\leq 1.001\sum_{n=2}^\infty \frac{n^3(70^n+1+2\cdot70^{n/2}g)}{0.75\cdot 70^{2n-1}}\cdot e^{2.985+0.949g} \nonumber\\
  	&\leq (3.17+0.087g)e^{0.949g}. \label{eqn_S2}
  \end{align}
  
  Thus, adding up \eqref{eqn_S1} and \eqref{eqn_S2} gives
  \begin{equation}\label{eqn_S1S2}
  	S_1+S_2\leq (82.26+18.88g)e^{0.95g}.
  \end{equation}
 Hence \eqref{thm_total_second_moments_eqn} follows by using \eqref{eqn_S1S2} in Proposition~\ref{prop_second_moments_summation}.
\end{proof}

\begin{remark}
	The reason why we choose $t_2=0.25$ in the proof of Theorem~\ref{thm_total_second_moments} is that the term on $1-t_2$ in the exponent of \eqref{thm_total_second_moments_eqn_S1S2} is close to $\frac{3}{2(1-t_2)}$ and the minimum point of the function $\frac{1}{t_2(1-t_2)}e^{\frac{3}{2(1-t_2)}}$ in $[0,\frac12]$ is at $t_2=0.25$. As regards the terms $n\ge3$, their portion in $S_2$ is very small, hence we take $t_n=0.25, n\ge3$ as well for brevity. The numerical estimates on the infinite series, exponents and other calculations in the proofs are done by the Mathematica software.
\end{remark}

\begin{proof}[Proof of Theorem~\ref{mainthm_gff}] 
Let $\cA$ be any finite collection of congruences in $O_S$ of multiplicity $s$. If $q\geq (82.26+18.88g)e^{0.95g}s^2$, then by Theorem~\ref{thm_total_second_moments} we get that 
	\[
	\sum_{j=1}^J\frac{M_j^{(2)}}{4\delta_j(1-\delta_j)}<1.
	\]
	By Theorem~\ref{thm_distortion_method}, we conclude that $\cA$ is not a covering system of $O_S$.
\end{proof}

\begin{proof}[Proof of Theorem~\ref{mainthm_ff}]

Let $O_S=\F_q[x]$ be the polynomial ring over $\F_q$ and let $s=1$ in Proposition~\ref{prop_second_moments_summation}. Then we have the genus $g=0$ and $\pi_{K,S}(1)=q$ in this case. We follow the proof of  Theorem~\ref{mainthm_gff}.  The term $n=1$ in \eqref{thm_total_second_moments_eqn_S1S2} is equal to
\begin{equation}
 	S_1=S_1(q)=\frac{q^2}{4(q-1)^2t_1(1-t_1)}\exp\set{\frac{3+qh(q)}{1-t_1}}.
 \end{equation}
 
 For $q\ge70$, we have
 \begin{equation}
 	S_1\leq S_1(70)\leq \frac{1.03}{4t_1(1-t_1)} \exp\set{\frac{3.08}{1-t_1}}.
 \end{equation}
 Taking $t_1=0.17$ gives
 \begin{equation}
 	S_1\leq 74.62.
 \end{equation}
 
 By \eqref{eqn_S2}, we have $S_2\leq 3.17$. It follows that $S_1+S_2\leq 77.79$ and
 $$\sum_{j=1}^J\frac{M_j^{(2)}}{4\delta_j(1-\delta_j)}< \frac{77.79}{q}.$$
By Theorem~\ref{thm_distortion_method}, there is no covering system of $\F_q[x]$ with distinct moduli for $q\geq 78$. Notice that there is no prime power in $[74,78]$ but $73$ is a prime. Since $q$ is a prime power, we conclude that there is no covering system of $\F_q[x]$ with distinct moduli for $q>73$.
 \end{proof}


\end{document}